\documentclass[12pt]{amsart}

\usepackage{bm,color, ytableau, url}
\textwidth=15cm
\textheight=22cm
\topmargin=0.5cm
\oddsidemargin=0.5cm
\evensidemargin=0.5cm

\numberwithin{equation}{section}

\newtheorem{theorem}{Theorem}[section]
\newtheorem{proposition}[theorem]{Proposition}
\newtheorem{lemma}[theorem]{Lemma}
\newtheorem{corollary}[theorem]{Corollary}

\theoremstyle{definition}
\newtheorem{definition}[theorem]{Definition}

\newtheorem{example}[theorem]{Example}

\newtheorem{remark}[theorem]{Remark}

\begin{document}

\title{Gr\"obner fans of Specht ideals}

\author{Hidefumi Ohsugi}
\address{
Hidefumi Ohsugi,
Department of Mathematical Sciences, School of Science,
Kwansei Gakuin University, Sanda, Hyogo 669-1337, Japan
}
\email{ohsugi@kwansei.ac.jp}

\author{Kohji Yanagawa}
\address{
Kohji Yanagawa,
Department of Mathematics, Kansai University, Suita 564-8680, Japan.
}
\email{yanagawa@kansai-u.ac.jp}

\newcommand{\add}[1]{\ensuremath{\langle{#1}\rangle}}
\newcommand{\tab}[1]{\ensuremath{{\rm Tab}({#1})}}
\newcommand{\stab}[1]{\ensuremath{{\rm STab}({#1})}}
\newcommand{\ini}[1]{\ensuremath{{\rm in}({#1})}}
\newcommand{\Fc}{{\mathcal F}}
\newcommand{\Ib}{{\mathbf I}}
\newcommand{\Vb}{{\mathbf V}}
\newcommand{\wb}{{\mathbf w}}
\newcommand{\RR}{{\mathbb R}}
\newcommand{\hlam}{{\widehat{\lambda}}}

\def\doml{\vartriangleleft}
\def\domle{\trianglelefteq}

\newcommand{\fS}{{\mathfrak S}}
\newcommand{\sm}{{\mathsf m}}
\newcommand{\init}{\operatorname{in}}

\begin{abstract}
In this paper, we give the Gr\"obner fan and the state polytope of a Specht ideal $I_\lambda$ explicitly. In particular, we show that the state polytope of $I_\lambda$ 
for a partition $\lambda=(\lambda_1, \ldots, \lambda_m)$ is always a generalized permutohedron, and it is a (usual) permutohedron if and only if $\lambda_{i-1}=\lambda_i>0$ for some $i$.    
\end{abstract}

\maketitle

\section{Introduction}
Let $S=K[x_1,\dots,x_n]$ be a polynomial ring over a field $K$, $\lambda$  a partition of $n$, and $\tab \lambda$ the set of tableaux of shape $\lambda$.
For a tableau $T \in \tab \lambda$, we have its Specht polynomial $f_T \in S$ (see Definition~\ref{Specht polynomial} below). 
The $n$-th symmetric group $\fS_n$ naturally acts on the vector space spanned by $\{ f_T \mid T \in \tab \lambda \}$.
This $\fS_n$-module is called a {\bf  Specht module}, and plays a crucial role in the representation theory of symmetric groups, especially when $\operatorname{char}(K)=0$. In the present paper, we study the {\bf Specht ideal} $I_\lambda \subset S$, which is generated by  $\{ f_T \mid T \in \tab \lambda \}$. 
Specht ideals have been studied by several authors from several points of view (sometimes under other names).
See, for example, \cite{BGS,MW,MRV,SY2}.   

For an ideal $I \subset S$, a finite subset $G$ of $I$ is called a \textbf{universal Gr\"obner basis}
if $G$ is a Gr\"obner basis of $I$ with respect to any monomial order.
In their unpublished manuscript, Haiman and Woo found a   universal Gr\"obner basis of $I_\lambda$, 
and Murai and the authors of the present paper gave a short proof of this result (\cite{OMY}).  
See Theorem~\ref{HW} below. 
In the present paper, we study the number of all possible initial ideals of $I_\lambda$.  
%(in other words, the Gr\"obner fan) 
%\textcolor{yellow}{First, we see that the initial ideal $\init_\prec(I_\lambda)$ depends only the order of variables 
%$x_{\sigma(1)} \prec x_{\sigma(2)} \prec \cdots \prec x_{\sigma(n)}$ for $\sigma \in \fS_n$.}
The following is a main result of this paper. 

\medskip

\noindent{\bf Theorem~\ref{n!/(m+1)!}.}
{\it For a partition $\lambda=(\lambda_1, \ldots, \lambda_m)$ of $n$ with $\lambda_m>0$, set 
$k:=\min \{ \, \lambda_{i-1}-\lambda_i \mid i=2,3, \ldots, m \, \}$. }
Then $I_\lambda$ admits exactly $n!/(k+1)!$ distinct initial ideals under all possible monomial orders of $S$. % {\rm (}equivalently, all $\sigma \in \fS_n${\rm )}.}

\medskip

The Gr\"obner fan of an ideal is introduced by Mora and Robbiano \cite{MoRo}.
Although there is a good software package {\tt Gfan} \cite{GfanSoft} for computing Gr\"obner fans,
the computation is very difficult in general.
On the other hand, the state polytope of a homogeneous ideal is
introduced by Bayer and Morrison \cite{BaMo}.
It is a dual of a Gr\"obner fan and 
a generalization of the Newton polytope of a single homogeneous polynomial.

We give a brief introduction of these concepts. 
Detailed definitions will be introduced in Section 3.
Given a vector $\wb \in \RR^n$,
a partial order on the set of monomials in $S$ is
defined by $x_1^{a_1} \dots x_n^{a_n} > x_1^{b_1} \dots x_n^{b_n}$
if $(a_1,\dots, a_n) \cdot \wb > (b_1,\dots,b_n) \cdot \wb$.
For a homogeneous ideal $I \subset S$, let 
$\init_\wb (I)$ be the ideal generated by
the initial forms of polynomials in $I$ with respect to $\wb$.
Clearly, $\init_\wb (I)$ is not a monomial ideal in general. 
However, for any monomial order $<$, the set 
$
\{ \wb \in \RR^n \mid  \init_\wb (I) = \init_< (I) \}
$
is nonempty, and forms an open convex polyhedral cone.
The \textbf{Gr\"obner fan} ${\rm GF} (I)$ of $I$ is a polyhedral complex generated by the 
closures of these cones. 
A convex polytope $P \subset \RR^n$ is called a \textbf{state polytope} of $I$ 
if ${\rm GF} (I)$ is the normal fan of $P$. In particular, each initial ideal of $I$ corresponds to each vertex of the state polytope of $I$.
See, for example, \cite{CLO, Stu} for the details. % on Gr\"obner fans and state polytopes.

\medskip

\noindent{\bf Theorem~\ref{genpermuto} and Corollary~\ref{permuto}.} 
{\it 
Let $\lambda=(\lambda_1, \ldots, \lambda_m)$ be a partition with $\lambda_m>0$. Then
%With the same notation as the above theorem, 
the state polytope of $I_\lambda$ is a generalized permutohedron. 
In particular, 
 the state polytope is a (usual) permutohedron
 if and only if $\lambda_i=\lambda_{i-1}$ for some $i \le m$.
}

\section{Preliminaries}
%\section{Gr\"obner fans of Specht ideals}\label{mainsection}

\ytableausetup{centertableaux,boxsize=0.3em}
A {\bf partition} of a positive integer $n$ is a non-increasing sequence of non-negative  integers  $\lambda=(\lambda_1,\dots,\lambda_m)$ with $\lambda_1+ \cdots+\lambda_m=n$, but we identify $(\lambda_1,\dots,\lambda_m)$ with $(\lambda_1,\dots,\lambda_m,0)$. 
Therefore we frequently assume that $\lambda_m >0$. 
If $\lambda$ is a partition of $n$, then we write $\lambda \vdash n$. 

 A partition $\lambda \vdash n$ is represented by its Young diagram. 
 For example, $(4,2,1)$ is represented as $\ydiagram {4,2,1}$. 
 A (\textbf{Young}) \textbf{tableau} of shape $\lambda$ is a bijective filling of the Young diagram of  $\lambda$
by the integers in $\{1,2, \ldots, n\}$. 
For example,
$$
\ytableausetup{mathmode, boxsize=1.3em}
\begin{ytableau}
3 & 5 & 1& 7   \\
4 & 2    \\
6 \\
\end{ytableau}
$$
is a tableau of shape $(4,2,1)$. 
%Let $\tab \lambda$ be the set of all tableaux of shape $\lambda$.
The box in the $i$-th row and the $j$-th column has the coordinates $(i,j)$, as in a matrix. For example, in the above tableau, the box in the $(3,1)$ position is filled by the number 6. 

\begin{definition}\label{Specht polynomial}
The {\bf Specht polynomial} $f_T$ of $T \in \tab \lambda$ is the product of all $x_i-x_j$ such that $i$ and $j$ are in the same column of $T$ and $j$ is in a lower position than $i$.
\end{definition}

For example, if $T$ is the above tableau, 
then $f_T=(x_3-x_4)(x_3-x_6)(x_4-x_6)(x_5-x_2).$ 
In this paper, we study the Specht ideal 
$$I_\lambda:=\langle \, f_T \mid T \in \tab \lambda \, \rangle \subset S$$
of $\lambda$.
If $\lambda=(n)$, then $I_\lambda$ is the trivial ideal $S$ itself. 
Therefore, in the rest of this paper, we assume that $\lambda_2 >0$. 

For partitions $\lambda=(\lambda_1,\dots,\lambda_m)$ and $\mu =(\mu_1,\dots,\mu_l)$ of $n$, we write $\mu \domle \lambda$ if $\lambda$ is larger than or equal to $\mu$ with respect to the dominance order,
that is,
$$ \mu_1+ \cdots + \mu_k  \leq \lambda_1+ \cdots+\lambda_k
\ \ \ \mbox{ for all }k. 
$$ By $\mu \doml \lambda$, we mean that $\mu \domle \lambda$ and 
$\mu \ne \lambda$. Now we can introduce an unpublished result of Haiman and Woo.  
%Recall that a finite set $G$ is called a {\bf universal Gr\"{o}bner basis} of an ideal $I \subset S$ if $G$ is a Gr\"{o}bner basis of $I$ with respect to any monomial order on $S$.

\begin{theorem}[Haiman-Woo, c.f.~\cite{OMY}]\label{HW}
With the above situation,
$$\{ f_T \mid T \in \tab \mu, \mu \vdash n, \mu \domle \lambda \}$$
forms a universal Gr\"obner basis of $I_\lambda$. 
\end{theorem}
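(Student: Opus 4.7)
The plan divides into two parts: \textbf{(a)} showing that every $f_T$ with $T$ of shape $\mu \domle \lambda$ lies in $I_\lambda$, and \textbf{(b)} showing that for every monomial order $\prec$ their leading monomials generate $\init_\prec(I_\lambda)$.

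For (a), I would prove the classical containment $I_\mu \subseteq I_\lambda$ for $\mu \domle \lambda$ by reducing to the cover case---$\lambda$ is obtained from $\mu$ by raising a single box by one row---and writing each shape-$\mu$ Specht polynomial as an explicit $K$-linear combination of shape-$\lambda$ Specht polynomials via a Garnir-style column-exchange relation. Iterating along a chain of dominance covers yields the general inclusion.

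For (b), observe that both $I_\lambda$ and the proposed Gr\"obner basis are $\fS_n$-invariant under permutation of variables, so after relabeling I may assume the order satisfies $x_1 \prec x_2 \prec \cdots \prec x_n$. Since $f_T$ depends only on the column \emph{sets} of $T$, I may further assume that $T$ is column-increasing; a direct expansion of the product of binomials then yields
\[
\init_\prec(f_T) \;=\; \pm \prod_{\ell=1}^{n} x_\ell^{\,r_T(\ell)-1},
\]
where $r_T(\ell)$ is the row of $\ell$ in $T$. Hence a monomial $x_1^{a_1}\cdots x_n^{a_n}$ lies in the ideal $J$ generated by these leading terms if and only if there exist a partition $\mu = (\mu_1,\dots,\mu_s) \domle \lambda$ and a map $r\colon \{1,\dots,n\}\to \{1,\dots,s\}$ such that $r(\ell)\le a_\ell+1$ for all $\ell$ and $|r^{-1}(i)| = \mu_i$ for every $i$.

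The containment $J \subseteq \init_\prec(I_\lambda)$ is immediate, so closing the proof reduces to the Hilbert-function inequality $\dim_K(S/J)_d \le \dim_K(S/I_\lambda)_d$ for every $d$. I would run this as an induction on $\lambda$ in the dominance order: the inductive hypothesis pins down $\init_\prec(I_\nu)$ for each $\nu \doml \lambda$, and, combined with the inclusions $I_\nu \subseteq I_\lambda$ from (a), cuts the task down to comparing the standard monomials of $J$ lying outside $\sum_{\nu \doml \lambda}\init_\prec(I_\nu)$ with the graded pieces of $I_\lambda / \sum_{\nu \doml \lambda} I_\nu$. \emph{This final comparison is the main obstacle:} it requires a concrete hold on that quotient, most naturally by decomposing $S/I_\lambda$ as a graded $\fS_n$-module and isolating the contribution indexed by $\lambda$. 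This character/Hilbert-series bookkeeping is presumably where the short proof in \cite{OMY} concentrates its effort.
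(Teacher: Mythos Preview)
The paper does not contain a proof of Theorem~\ref{HW}: the result is quoted as due to Haiman--Woo, with the short proof deferred entirely to the cited reference \cite{OMY}. There is therefore no in-paper argument to compare your proposal against.

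That said, your outline is reasonable in shape but, as you yourself flag, incomplete. Part~(a) is standard and your cover-relation approach is fine. In part~(b) the reduction to a Hilbert-function inequality $\dim_K(S/J)_d \le \dim_K(S/I_\lambda)_d$ is also fine, but you stop exactly at the step that carries all the content: comparing the standard monomials of $J$ lying outside $\sum_{\nu\doml\lambda}\init_\prec(I_\nu)$ with the graded pieces of $I_\lambda/\sum_{\nu\doml\lambda}I_\nu$. Saying this ``requires a concrete hold on that quotient'' and is ``presumably where the short proof in \cite{OMY} concentrates its effort'' is an honest acknowledgment that the key step is missing. Without either an explicit bijective count (matching standard monomials of $J$ to a known basis of $S/I_\lambda$) or an independent determination of $\dim_K(S/I_\lambda)_d$, the inequality---and hence the theorem---remains unproved in your proposal.
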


%While this fact has been used in \cite{OMY}, the detailed discussion is given in (the proof of) \cite[Lemma~3.10]{RY}. 
%The crucial point for this fact is that all elements of our Gr\" obner basis of $I_\lambda$ are products of linear forms.  

While the following lemma has been used in \cite{OMY}, the detailed discussion is given in (the proof of) \cite[Lemma~3.10]{RY}. 
The crucial point for this fact is that all elements of our Gr\" obner basis of $I_\lambda$ are products of linear forms.  
\begin{lemma}
\label{pickup}
Let $\init_<(I_\lambda)$ be the initial ideal of $I_\lambda$ with respect to a monomial order $<$ on $S$. 
Take the  permutation  $\sigma \in \fS_n$ with  $x_{\sigma(1)} < x_{\sigma(2)} < \cdots < x_{\sigma(n)}$. 
Then, for the lexicographic order $\prec$ with  $x_{\sigma(1)} \prec x_{\sigma(2)} \prec \cdots \prec x_{\sigma(n)}$, we have  $\init_< (I_\lambda)=\init_{\prec}(I_\lambda)$. 
\end{lemma}
 
 Unless otherwise specified, we fix $\sigma \in \fS_n$ and use the  lexicographic order $\prec$ with  $x_{\sigma(1)} \prec x_{\sigma(2)} \prec \cdots \prec x_{\sigma(n)}$.   
 Therefore we simply denote $\init_\prec(I_\lambda)$ by $\ini{I_\lambda}$.

If $i$ and $j$ are in the same column of $T$, 
we have $f_{\tau T} =-f_T$ for the transposition $\tau= (i \, j)$.  
In this sense, to consider $f_T$, we may assume that $T$ is \textbf{column standard} (with respect to $\sigma$), that is, all columns are increasing from top to bottom with respect to the order $\sigma(1) \prec \sigma(2) \prec \cdots \prec \sigma(n)$.  If a column standard tableau $T$ is also row standard (i.e., all rows are increasing from left to right with respect to $\prec$), we say $T$ is \textbf{standard}. Let $\stab \lambda$ be the set of all standard tableaux of shape $\lambda$.  It is a classical result that $\{ f_T \mid T \in \stab \lambda \}$ is a basis of the vector space spanned by $\{ f_T \mid T \in \tab \lambda \}$. 
Hence we have $I_\lambda= \langle \, f_T \mid T \in \stab \lambda \, \rangle.$ 
We also remark that, if $T$ is column standard and if the number $i$ is in the $d_i$-th row of $T$
for $i=1,2,\dots,n$, then we have $$\ini {f_T}=\prod_{i=1}^n x_i^{d_i-1}.$$
This equation is frequently used throughout the paper.
%In particular, for column standard tableaux $T$ and $T'$, $\ini {f_T} = \ini {f_{T'}} $ if and only if the $j$-th rows of $T$ and $T'$ have the same numbers for any $j$ (that is, $T$ and $T'$ are {\bf row equivalent}).

The next result follows from Theorem~\ref{HW} and \cite[Lemma~4.3.1]{L}. Note that the following set is still far from a minimal Gr\"obner basis in general. 

\begin{corollary}[{\cite[Remark~3.8]{OMY}}]\label{pre minimal GB}
With the above situation,
$$\left\{ f_T \mid T \in \stab \mu,\, \mu \vdash n, \, \mu \domle\lambda, \, \mu_1 =\lambda_1 \right\}$$
%is a  Gr\"{o}bner basis of $I_\lambda$. 
forms a Gr\"obner basis of $I_\lambda$ with respect to $\prec$. 
\end{corollary}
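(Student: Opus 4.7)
The plan is to prune the universal Gr\"obner basis of Theorem~\ref{HW} down to the claimed set in two stages. In the first stage, I would restrict each $\tab\mu$ to $\stab\mu$. The remark preceding the corollary says that for any column-standard $T \in \tab\mu$, $\ini{f_T} = \prod_{i=1}^n x_i^{d_i-1}$ depends only on the row contents of $T$. Since every $T \in \tab\mu$ satisfies $f_T = \pm f_{T^c}$ for its column-standardization $T^c$, and $T^c$ is row-equivalent to a unique standard tableau with the same initial monomial, passing from $\tab\mu$ to $\stab\mu$ in the Haiman-Woo generating set leaves the generated initial ideal unchanged. So $\{f_T : T \in \stab\mu,\, \mu \domle \lambda\}$ is already a Gr\"obner basis of $I_\lambda$ with respect to $\prec$.

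The second stage eliminates shapes $\mu$ with $\mu_1 < \lambda_1$. For each $T \in \stab\mu$ with $\mu \doml \lambda$ and $\mu_1 < \lambda_1$, I would produce a $T' \in \stab\lambda$ whose initial monomial divides $\ini{f_T}$; this renders $\ini{f_T}$ redundant as a generator of $\ini{I_\lambda}$, and the pruned set remains a Gr\"obner basis. To build $T'$, my target is a row-assignment $d' \colon \{1, \ldots, n\} \to \{1, \ldots, m\}$ satisfying $d'_i \leq d_i$, where $d_i$ is the row of $i$ in $T$. The approach is to insert $1, 2, \ldots, n$ into $T'$ in order, at each step choosing an addable corner of the current sub-diagram lying in a row $\leq d_i$. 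The dominance inequality $\sum_{k \leq j} \mu_k \leq \sum_{k \leq j} \lambda_k$ should guarantee that the first $j$ rows of $\lambda$ have room for the first $j$ rows of $T$, suggesting that a compatible corner always exists.

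The hard part will be this second stage: one must simultaneously preserve standardness of $T'$ and keep $d'_i \leq d_i$ throughout the construction. A priori, the unique addable corner at a given step might lie in a row $> d_i$, and ruling this out requires carefully tracking how the already-placed sub-shape of $T'$ dominates the shape formed by the first $i-1$ entries of $T$. This combinatorial bookkeeping is precisely the content of \cite[Lemma~4.3.1]{L}, which can be invoked in place of a direct ad hoc argument to certify the existence of the desired $T'$.
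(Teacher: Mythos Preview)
Your approach is correct and essentially identical to the paper's: the paper simply states that the corollary follows from Theorem~\ref{HW} together with \cite[Lemma~4.3.1]{L}, which is precisely the two-stage reduction you outline (passing to standard tableaux, then invoking the cited lemma to discard shapes with $\mu_1<\lambda_1$). Your explicit discussion of the first stage and of the greedy construction behind \cite[Lemma~4.3.1]{L} merely unpacks what the paper leaves implicit.
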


\begin{lemma}\label{gap for mu}
Let $\lambda=(\lambda_1, \ldots, \lambda_m)$ be a partition of $n$ with $\lambda_m>0$, and set $k:=\min \{ \, \lambda_{i-1}-\lambda_i \mid i=2,3, \ldots, m \, \}$. If $\ini{f_T}$ for $T \in \stab \mu$ with $\mu \domle \lambda$ is a minimal generator of $\ini{I_\lambda}$ and $\sigma(n)$ is in the $j$-th row of $T$, then either $j=1$ or $j\ge 2$ and $\mu_{j-1}-\mu_j \ge k$. 
\end{lemma}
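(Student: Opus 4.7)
The plan is to prove the contrapositive: assuming $j \ge 2$ and $\mu_{j-1}-\mu_j<k$, I will construct a tableau $T' \in \stab{\nu}$ of some shape $\nu$ with $\mu \doml \nu \domle \lambda$ such that $\ini{f_{T'}}$ properly divides $\ini{f_T}$; by Theorem~\ref{HW} this forces $\ini{f_T}$ not to be a minimal generator of $\ini{I_\lambda}$. Three preliminary reductions simplify the setup: (i) the remark preceding Corollary~\ref{pre minimal GB} shows that $\ini{f_T}$ determines the row-equivalence class of a column-standard $T$, hence its shape, so Corollary~\ref{pre minimal GB} lets me assume $\mu_1=\lambda_1$; (ii) the hypothesis forces $k\ge 1$, and the case $j=2$ is vacuous since $\mu_1-\mu_2<k\le\lambda_1-\lambda_2$ together with $\mu_1=\lambda_1$ would give $\mu_2>\lambda_2$, violating $\mu\domle\lambda$; (iii) $\mu\ne\lambda$ (else $\mu_{j-1}-\mu_j=\lambda_{j-1}-\lambda_j\ge k$), so $\mu\doml\lambda$. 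Thus we may assume $j\ge 3$.

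Next I let $s^*$ be the largest index in $\{2,\dots,j-1\}$ with $\mu_{s^*-1}>\mu_{s^*}$; such an $s^*$ exists, for otherwise $\mu_1=\mu_2=\cdots=\mu_{j-1}$ combined with $\mu_1=\lambda_1$ and $\mu_2\le\lambda_2$ would yield $\lambda_1\le\lambda_2$, contradicting $k\ge 1$. Set $\nu_l=\mu_l$ for $l\notin\{s^*,j\}$, $\nu_{s^*}=\mu_{s^*}+1$, and $\nu_j=\mu_j-1$. The maximality of $s^*$ makes $\mu_s=\mu_{s^*}$ on $[s^*,j-1]$, and since the $\prec$-maximal entry $\sigma(n)$ must sit at a corner of the standard tableau $T$ we have $\mu_j>\mu_{j+1}$ (or $j=m$); together these facts make $\nu$ a partition of $n$ with $\nu_1=\lambda_1$.

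Form $T'$ from $T$ by moving $\sigma(n)$ from $(j,\mu_j)$ to $(s^*,\mu_{s^*}+1)$. Away from these two positions $T'$ coincides with $T$ or a prefix thereof, and the only genuinely new column is column $\mu_{s^*}+1$, whose occupied rows in $T$ are $1,\dots,s^*-1$ by the defining property of $s^*$; appending $\sigma(n)$ in row $s^*$ keeps the column contiguous and increasing, while row $s^*$ acquires $\sigma(n)$ at its rightmost position. Hence $T'\in\stab{\nu}$, and $\ini{f_{T'}}=\ini{f_T}/x_{\sigma(n)}^{j-s^*}$ properly divides $\ini{f_T}$.

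The main obstacle is verifying $\nu\domle\lambda$. With $e_s:=\sum_{l\le s}(\lambda_l-\mu_l)\ge 0$, this reduces to showing $e_s\ge 1$ for all $s\in[s^*,j-1]$. First, $e_{j-1}\ge 1$: if $e_{j-1}=0$, then $e_j\ge 0$ gives $\mu_j\le\lambda_j$, and $\mu_{j-1}-\mu_j<k\le\lambda_{j-1}-\lambda_j$ then forces $\mu_{j-1}<\lambda_{j-1}$, so $e_{j-1}>e_{j-2}\ge 0$, a contradiction. Next, let $s_0\in[s^*,j-1]$ be the smallest index with $e_{s_0}=0$, if any; by the boundary case $s_0<j-1$, and the flat-region property gives $\mu_{s_0+1}=\mu_{s^*}$; from $e_{s_0-1}\ge 0$ I obtain $\mu_{s^*}\ge\lambda_{s_0}$, so $e_{s_0+1}=\lambda_{s_0+1}-\mu_{s^*}<\lambda_{s_0}-\mu_{s^*}\le 0$, contradicting $\mu\domle\lambda$. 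This flat-region analysis is the heart of the argument and explains why $s^*$ must be chosen maximal: a smaller choice enlarges the range $[s^*,j-1]$, and the bound $e_s\ge 1$ can then break down inside it.
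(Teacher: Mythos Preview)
Your argument is correct and follows essentially the same route as the paper's proof: reduce to $\mu_1=\lambda_1$ and $\mu\doml\lambda$ via Corollary~\ref{pre minimal GB} and the row-equivalence remark, dispose of $j=2$, pick the index $s^*$ (the paper's $l$) with $\mu_{s^*-1}>\mu_{s^*}=\cdots=\mu_{j-1}$, verify the strict dominance inequalities on $[s^*,j-1]$, and lift $\sigma(n)$ to row $s^*$ to produce $T'\in\stab{\nu}$ with $\ini{f_{T'}}$ properly dividing $\ini{f_T}$. The only cosmetic differences are that you establish $e_{j-1}\ge 1$ before the interior indices (the paper does it last) and phrase the interior step via a minimal counterexample $s_0$ rather than the paper's inductive sweep; both arguments hinge on the same comparison $\mu_{s_0}\ge\lambda_{s_0}>\lambda_{s_0+1}$ against the flat block $\mu_{s_0}=\mu_{s_0+1}$.
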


\begin{proof} 
Assume that $j\ge 2$ and $(0\le ) \,\mu_{j-1}-\mu_j < k$. 
By Corollary~\ref{pre minimal GB}, we have $\mu \doml \lambda$ and $\mu_1 =\lambda_1$. 
Assume that $j=2$.
Since $\mu_2 \le \lambda_2$, it follows that $k > \mu_1-\mu_2 =\lambda_1-\mu_2 \ge \lambda_1-\lambda_2\ge k$, 
and this is a contradiction. Thus we have $j \ge 3$.
Since $\mu_1 = \lambda_1 > \lambda_2 \ge \mu_2$ and $j-1 \ge 2$,
%now, $\mu_1 =\mu_{j-1}$ implies $j=2$. 
%Hence 
there exists $1 < l < j$ such that 
$$
\mu_{l-1}>\mu_l =\mu_{l+1}=\cdots =\mu_{j-1}.
$$
Note that $\sum_{i=1}^{l-1} \mu_i \le \sum_{i=1}^{l-1} \lambda_i$. 
Next we will show that
\begin{equation}\label{L J-1}
    \sum_{i=1}^s \mu_i < \sum_{i=1}^s \lambda_i \mbox{ for all } l \le s \le j-1. 
\end{equation}

\noindent
First we prove it  for $l \le s \le j-2$ (when $l \le j-2$).
If $\sum_{i=1}^l \mu_i=\sum_{i=1}^l \lambda_i$,
then we have $\mu_l \ge \lambda_l$. Since $\lambda_l -  \lambda_{l+1} \ge k >0$ and $\mu_l=\mu_{l+1}$, we have $\mu_{l+1} > \lambda_{l+1}$. 
Hence $\sum_{i=1}^{l+1} \mu_i > \sum_{i=1}^{l+1} \lambda_i$,  but it contradicts $\mu \doml \lambda$. 
Thus we have $\sum_{i=1}^l \mu_i < \sum_{i=1}^l \lambda_i$. 
Similarly, for $l \le s \le j-2$, we have $\sum_{i=1}^s \mu_i < \sum_{i=1}^s \lambda_i$.  
%Next we 
It remains to show that $\sum_{i=1}^{j-1} \mu_i < \sum_{i=1}^{j-1} \lambda_i$. 
Assume the contrary, that is,  $\sum_{i=1}^{j-1} \mu_i = \sum_{i=1}^{j-1} \lambda_i$. 
Since $\sum_{i=1}^{j-2} \mu_i \le \sum_{i=1}^{j-2} \lambda_i$ (unless $l=j-1$, the inequality is strict), we have $\mu_{j-1} \ge \lambda_{j-1}$. Moreover, since
$$\mu_{j-1}-\mu_j < k \le \lambda_{j-1}-\lambda_j\le\mu_{j-1}-\lambda_j,$$
we have $\mu_j > \lambda_j$ and hence $\sum_{i=1}^j \mu_i > \sum_{i=1}^j \lambda_i$. This is a contradiction. 
Summing up, we have $\sum_{i=1}^s \mu_i < \sum_{i=1}^s \lambda_i$ for all $l \le s \le j-1$. 

Since $\sigma(n)$ is in the $j$-th row of $T \in \stab \mu$, we have $\mu_j >\mu_{j+1}$.  
We define the partition $\nu \vdash n$ by $\nu_l=\mu_l+1$, $\nu_j =\mu_j-1$, and $\nu_i=\mu_i$ for $i \ne l,j$ (since $\mu_{l-1}>\mu_l$ and $\mu_j >\mu_{j+1}$, $\nu$ is actually a partition).
By (\ref{L J-1}) above, we have $\mu \doml \nu \domle \lambda$. For the tableau $T$, lifting $\sigma(n)$ to the $l$-th row, we get a new tableau $T' \in \stab \nu$. Then $\ini{f_{T'}} \in \ini{I_\lambda}$ strictly divides $\ini{f_T}$, but it contradicts the assumption that $\ini{f_T}$ is a minimal generator of $\ini{I_\lambda}$. 
\end{proof} 
  
\begin{corollary}\label{cor of gap for mu}
With the same notation as Lemma~{\rm \ref{gap for mu}}, assume that $\sigma(n)$ is in (the right most box of) the $j$-th row
of $T \in \stab{\mu}$ with $j \ge 2$
and $\ini{f_T}$ is a minimal generator of $\ini{I_\lambda}$. 
If the number just above $\sigma(n)$ (i.e., the one in the $(j-1, \mu_j)$ position) is $\sigma(i)$, then we have $i < n-k$. 
\end{corollary}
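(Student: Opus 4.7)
The plan is to argue by contradiction, combining the gap estimate $\mu_{j-1}-\mu_j \ge k$ supplied by Lemma~\ref{gap for mu} with the standardness of $T$ along row $j-1$.

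First, I would invoke Lemma~\ref{gap for mu}. All of its hypotheses are in place: $T \in \stab \mu$ with $\mu \domle \lambda$, the integer $\sigma(n)$ sits in the $j$-th row with $j \ge 2$, and $\ini{f_T}$ is a minimal generator of $\ini{I_\lambda}$. The lemma therefore yields $\mu_{j-1}-\mu_j \ge k$. Geometrically, this means that the $k+1$ boxes
\[
(j-1,\mu_j),\ (j-1,\mu_j+1),\ \ldots,\ (j-1,\mu_j+k)
\]
all lie inside the Young diagram of $\mu$, and hence carry entries of $T$.

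Next, I would exploit the fact that $T$ is standard with respect to the order $\sigma(1)\prec\sigma(2)\prec\cdots\prec\sigma(n)$: rows strictly increase from left to right. Writing the entry at position $(j-1,\mu_j+s)$ as $\sigma(i_s)$ for $s=0,1,\ldots,k$, we have
\[
i=i_0<i_1<\cdots<i_k .
\]
Now suppose, for a contradiction, that $i \ge n-k$. Then $i_0,i_1,\ldots,i_k$ is a strictly increasing sequence of $k+1$ integers lying in the $(k+1)$-element set $\{n-k,n-k+1,\ldots,n\}$, forcing equality $\{i_0,\ldots,i_k\}=\{n-k,\ldots,n\}$. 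In particular $i_k=n$, so $\sigma(n)$ appears at position $(j-1,\mu_j+k)$. This contradicts the assumption that $\sigma(n)$ occupies position $(j,\mu_j)$, since each integer appears only once in $T$. Hence $i<n-k$.

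The proof is essentially bookkeeping once Lemma~\ref{gap for mu} is in hand; the only point requiring care is verifying that the row $j-1$ really does contain at least $k+1$ boxes at or to the right of column $\mu_j$, and this is precisely what the inequality $\mu_{j-1}-\mu_j\ge k$ secures. I do not foresee a significant obstacle.
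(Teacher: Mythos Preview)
Your argument is correct and follows essentially the same route as the paper: invoke Lemma~\ref{gap for mu} to obtain $\mu_{j-1}-\mu_j\ge k$, then use row-standardness to see that the $k$ boxes to the right of $\sigma(i)$ in row $j-1$ are filled by $\sigma(l)$ with $i<l<n$, forcing $n-i-1\ge k$. The only cosmetic difference is that you phrase the final counting as a contradiction on the set $\{n-k,\ldots,n\}$ rather than the paper's direct inequality $\#\{l\mid i<l<n\}\ge k$.
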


\begin{proof}
Since $\mu_{j-1}-\mu_j \ge k$ by the lemma,
there are at least $k$ boxes in 
the right of the box filled by $\sigma(i)$.
%If the number above $\sigma(n)$ is $\sigma(i)$ for $i \ge n-k$, 
These boxes are filled by $\sigma(l)$ for $i < l <n$.  
Since $\# \{ \sigma(l) \mid i < l <n\} =n-i-1 \ge k$,
we have $i \le n-k-1$.
\end{proof}
\begin{definition}
For a monomial $\sm :=\prod_{i=1}^n x_i^{a_i} \in S$, set $\deg_i \sm=a_i$.
For a partition $\lambda \vdash n$ and $1 \le i \le n$, set 
$$d_\lambda(i):=\sum_{T \in \stab \lambda} \deg_i (\ini {f_T}).$$
   \end{definition}

\begin{lemma}\label{d_lam}
With the above notation,
we have $$d_\lambda(\sigma(i))\le d_\lambda(\sigma(i+1))$$ for all $1 \le i <n$. Moreover, the inequality is strict if and only if there exists $T \in \stab \lambda$ such that $\sigma(i)$ and $\sigma(i+1)$ are in the same column of $T$. 
\end{lemma}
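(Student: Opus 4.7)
The plan is to construct an involution on $\stab{\lambda}$ that swaps the roles of $\sigma(i)$ and $\sigma(i+1)$ whenever this is permissible, and then compare the degree contributions of paired tableaux versus fixed points.

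Recall that for a column standard tableau $T$, if the number $a$ lies in the $r$-th row of $T$, then $\deg_a(\ini{f_T})=r-1$. Define $\phi:\stab{\lambda}\to\stab{\lambda}$ as follows: for $T\in\stab{\lambda}$, let $\phi(T)$ be the tableau obtained by exchanging the positions of $\sigma(i)$ and $\sigma(i+1)$, whenever these two entries lie neither in a common row nor in a common column of $T$; otherwise set $\phi(T)=T$. The first step is to verify that $\phi(T)\in\stab{\lambda}$ in the non-fixed case: since $\sigma(i)$ and $\sigma(i+1)$ are consecutive in the order $\prec$, every other entry of $T$ is either $\prec \sigma(i)$ or $\succ\sigma(i+1)$, so each row and column inequality involving a neighbor of either swapped cell is preserved. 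Hence $\phi$ is a well-defined involution.

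Next I would analyze the fixed points. Suppose $\sigma(i)$ and $\sigma(i+1)$ occupy the same row of $T$. Since rows are strictly increasing with respect to $\prec$ and no entry lies strictly between them in the order, they must be \emph{horizontally adjacent}, with $\sigma(i)$ immediately to the left of $\sigma(i+1)$; in particular they lie in the same row, so $\deg_{\sigma(i)}(\ini{f_T})=\deg_{\sigma(i+1)}(\ini{f_T})$. The analogous argument shows that if they lie in a common column then $\sigma(i)$ is immediately above $\sigma(i+1)$, so $\deg_{\sigma(i)}(\ini{f_T})<\deg_{\sigma(i+1)}(\ini{f_T})$.

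Finally, I would sum the degree contributions. For a non-fixed pair $\{T,\phi(T)\}$, swapping the two entries shows
\[
\deg_{\sigma(i)}(\ini{f_T})+\deg_{\sigma(i)}(\ini{f_{\phi(T)}})=\deg_{\sigma(i+1)}(\ini{f_T})+\deg_{\sigma(i+1)}(\ini{f_{\phi(T)}}),
\]
so these pairs cancel in the difference $d_\lambda(\sigma(i+1))-d_\lambda(\sigma(i))$. The row-fixed points also contribute $0$ to this difference, while each column-fixed point contributes a strictly positive amount. This gives $d_\lambda(\sigma(i))\le d_\lambda(\sigma(i+1))$, with strict inequality precisely when some $T\in\stab{\lambda}$ has $\sigma(i)$ and $\sigma(i+1)$ in a common column, as required.

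The main subtle point is the verification that $\phi(T)$ remains standard; everything else is bookkeeping. This reduces to the observation that consecutiveness of $\sigma(i),\sigma(i+1)$ in the linear order $\prec$ forces any same-row or same-column coincidence to be an adjacency, and rules out the "wrong" orientation (namely $\sigma(i+1)$ appearing left of or above $\sigma(i)$).
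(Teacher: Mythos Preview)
Your proof is correct and follows essentially the same approach as the paper: partition $\stab{\lambda}$ according to whether $\sigma(i)$ and $\sigma(i+1)$ share a row, share a column, or neither, and in the last case pair $T$ with the tableau obtained by applying the transposition $(\sigma(i)\ \sigma(i+1))$. Your write-up is somewhat more detailed than the paper's (which simply asserts $\tau T\in\stab{\lambda}$ without justification), and your adjacency observation for the fixed points, while correct, is stronger than what is strictly needed for the degree comparison.
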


\begin{proof}
Take $T \in \stab \lambda$. 
If $\sigma(i)$ and $\sigma(i+1)$ are in the same row of $T$, we have $\deg_{\sigma(i)}( \ini{f_T})= \deg_{\sigma(i+1)} (\ini{f_T})$. 
Next, assume that  $\sigma(i)$ and $\sigma(i+1)$ are in different rows and different columns of $T$. For the transposition $\tau =(\sigma(i) \ \, \sigma(i+1))$, we have $\tau T \in \stab \lambda$, and 
$$\deg_{\sigma(i)} (\ini{f_T})+ \deg_{\sigma(i)} (\ini{f_{\tau T}})= \deg_{\sigma(i+1)} (\ini{f_T})+ \deg_{\sigma(i+1)} (\ini{f_{\tau T}}).$$ 
Finally, if $\sigma(i)$ and $\sigma(i+1)$ are in the same column of $T$, we have $\deg_{\sigma(i)} (\ini{f_T}) < \deg_{\sigma(i+1)} (\ini{f_T})$.
\end{proof}

For a partition $\lambda=(\lambda_1, \ldots,\lambda_m) \vdash n$ with $\lambda_1 \ge 2$, we define $\hlam=(\hlam_1, \ldots,\hlam_l) \vdash (n-1)$ inductively as follows. Set $\hlam_1 =\lambda_1-1$, and 
$$
\hlam_i = \min \left \{ \hlam_{i-1}, \ \sum_{j=1}^i \lambda_j -  \sum_{j=1}^{i-1} \hlam_j-1 \right \}
$$
for $i \ge 2$. 
For example, since $\lambda_1 + \lambda_2 -  \hlam_1-1 = \lambda_2$, we have
$\hlam_2= \min \{\lambda_1-1, \lambda_2\}$.
\begin{itemize}
    \item 
    If $\lambda_2 < \lambda_1$, then $\hlam_1=\lambda_1-1$ and $\hlam_i= \lambda_i$ for $i \ge 2$.
    We can prove it by induction.
   Assume that $i >2$ and $\hlam_k= \lambda_k$ for $2 \le k \le i-1$.
   Then
$\sum_{j=1}^i \lambda_j -  \sum_{j=1}^{i-1} \hlam_j-1= \lambda_i$.
Hence we have $\hlam_i= \min \{\lambda_{i-1}, \lambda_i\} = \lambda_i$.
    \item 
     It is easy to show that $\hlam_1=\hlam_2=\cdots =\hlam_i=\lambda_1-1$
      if $\lambda_1=\lambda_2=\cdots =\lambda_i$.  
%     We can prove it by induction.
%   Assume that $i \ge 2$ and $\hlam_k= \lambda_1-1$ for $1 \le k \le i-1$.
%   Then $$\sum_{j=1}^i \lambda_j -  \sum_{j=1}^{i-1} \hlam_j-1= i \lambda_1 - (i-1) (\lambda_1 -1)-1
%   =\lambda_1 +i -2.$$
%   Hence we have $\hlam_i= \min \{\lambda_1 -1,\lambda_1 +i -2 \} = \lambda_1 -1$.
\end{itemize} 
For $\nu \vdash (n-1) $, we define the partition $\overline{\nu}$ by 
$\overline{\nu}_1 = \nu_1+1$ and $\overline{\nu}_i = \nu_i $ for $i\ge 2$.

\begin{lemma}\label{additional_lemma}
For a partition $\lambda\vdash n$, set 
$$X:=\{ \, \nu \vdash (n-1) \mid \overline{\nu} \domle \lambda \, \}.$$
Then, for $\nu \vdash (n-1)$, $\nu \in X$ if and only if $\nu \domle \hlam$.  
\end{lemma}

\begin{proof}
Let $k:=\max \{i \mid \lambda_i=\lambda_1\}$. 
 Consider the partition $\rho \vdash (n-1)$ defined by $\rho_k=\lambda_k -1$ and $\rho_i=\lambda_i$ for $i \ne k$. 
  We also define $\tau=(\tau_1, \ldots, \tau_{s+1}) \vdash (n-1)$ by $\tau_i =\lambda_1 -1$ for $1 \le i \le s$ and $\tau_{s+1}=r$, where $s$ is the quotient and $r$ is the remainder when $n-1$ is divided by $\lambda_1 -1$.
 % there is a positive integer $s$ and a non-negative integer $r$ such that $\tau:=((\lambda_1-1)^s, r)$ is a partition of $n-1$.   
Then it is easy to see that, for $\nu \vdash (n-1)$, 
\begin{equation}\label{useful_fact}
    \nu \in X \Longleftrightarrow \nu  \domle \rho, \tau. 
\end{equation}
In fact, for $\nu \vdash (n-1)$, $\nu_1 \le \lambda_1-1$ if and only if $\nu \domle \tau$. So we may assume that $\nu_1 \le \lambda_1-1$. 
We have $1+ \sum_{i=1}^j \nu_i =\sum_{i=1}^j \overline{\nu}_i$ for all $j$,
 $$%\sum_{i=1}^j \nu_i < 
\sum_{i=1}^j \overline{\nu}_i \le j \lambda_1 = \sum_{i=1}^j \lambda_i=\sum_{i=1}^j \rho_i \quad \text{for } j <k,$$ 
and $ \sum_{i=1}^j \lambda_i=1+\sum_{i=1}^j \rho_i $ for all $j \ge k$. So $\overline{\nu} \domle \lambda$ if and only if $\nu \domle \rho$.

Since $\hlam_1 = \lambda_1-1$
and
$$ \sum_{j=1}^s \hlam_j 
\le   \sum_{j=1}^{s-1} \widehat{\lambda}_j  + \left(\sum_{j=1}^s \lambda_j -  \sum_{j=1}^{s-1} \hlam_j-1 \right) = \sum_{j=1}^s \lambda_j -1
\le \sum_{j=1}^s \rho_j$$
for $s \ge 2$, we have $\hlam \domle \rho, \tau$.
Thus $\hlam  \in X$ from (\ref{useful_fact}).

Since the set of partitions of $n-1$ forms a lattice with respect to the dominance order, $X$ has the maximum element $\rho \wedge \tau$. 
So it suffices to show that $\hlam = \rho \wedge \tau$. For this purpose, it suffices to show that $\hlam$ is a maximal element of $X$, equivalently, no element of $X$ covers $\hlam$.  By \cite[Proposition~2.3]{B}, if $\nu$ covers $\hlam$, then there are two integers $i, i'$ with $i< i'$ such that $\nu_i=\hlam_i+1$, $\nu_{i'}=\hlam_{i'}-1$, and $\nu_j=\hlam_j$ for all $j \ne i, i'$. 
If $i=1$ (resp. $i>1$), then $\nu_1 =\lambda_1$ (resp.  $\nu \not  \domle \rho$). 
From (\ref{useful_fact}), 
%Then it is easy to see that $\overline{\nu} \not  \domle \lambda$, and hence 
$\nu \not \in X$.
\end{proof}

\begin{lemma}\label{elimination} 
For a partition $\lambda\vdash n$,  %with $\lambda_1 \ge 2$, 
we have 
$$I_\hlam = I_\lambda \cap K[x_{\sigma(1)}, \ldots, x_{\sigma(n-1)}].$$
\end{lemma}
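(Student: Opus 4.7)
The plan is to combine Corollary~\ref{pre minimal GB} with the standard elimination theorem for Gr\"obner bases. Since $\prec$ is the lexicographic order with $x_{\sigma(n)}$ as the largest variable, it is an elimination order for $x_{\sigma(n)}$, so the elimination ideal $I_\lambda\cap K[x_{\sigma(1)},\dots,x_{\sigma(n-1)}]$ is generated by the elements of $G:=\{f_T : T\in\stab\mu,\ \mu\domle\lambda,\ \mu_1=\lambda_1\}$ that are free of $x_{\sigma(n)}$. The first task is to identify them. The variable $x_{\sigma(n)}$ enters $f_T$ precisely when $\sigma(n)$ shares its column with another entry; being the largest letter under $\sigma$, in a standard $T\in\stab\mu$ it occupies some corner $(j,\mu_j)$ with $\mu_j>\mu_{j+1}$, and the column of this corner has height $\ge j$. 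Hence $\sigma(n)$ is alone in its column iff $j=1$ and $\mu_1>\mu_2$. In that case, erasing $\sigma(n)$ yields a standard tableau $T'\in\stab{\tilde\mu}$ of shape $\tilde\mu:=(\mu_1-1,\mu_2,\dots,\mu_l)\vdash n-1$, with $f_T=f_{T'}$; every such $T'$ extends uniquely by restoring $\sigma(n)$ to $(1,\mu_1)$. Letting $\mathcal{M}$ denote the collection of all such $\tilde\mu$, one obtains
$$I_\lambda\cap K[x_{\sigma(1)},\dots,x_{\sigma(n-1)}]=\sum_{\tilde\mu\in\mathcal{M}}I_{\tilde\mu}.$$

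The inclusion $I_{\hlam}\subseteq\sum_{\tilde\mu\in\mathcal{M}}I_{\tilde\mu}$ is obtained by exhibiting $\hlam$ itself as an element of $\mathcal{M}$, witnessed by $\mu:=(\lambda_1,\hlam_2,\hlam_3,\dots)$: the defining recursion for $\hlam$ gives $\sum_{i\le k}\hlam_i\le A_k-1$ (writing $A_k:=\sum_{i\le k}\lambda_i$), which rearranges directly to $\mu\domle\lambda$; and $\hlam_2\le\hlam_1=\lambda_1-1$ ensures $\mu_1>\mu_2$. For the opposite inclusion, Theorem~\ref{HW} yields $I_\nu\subseteq I_{\hlam}$ whenever $\nu\domle\hlam$, so it is enough to prove $\tilde\mu\domle\hlam$ for every $\tilde\mu\in\mathcal{M}$. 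The crux here is the closed-form identity
$$\sum_{i=1}^{k}\hlam_i=\min\bigl(A_k-1,\ k\,\hlam_1\bigr)\qquad(k\ge 1),$$
which I would establish by strong induction on $k$. The upper bound is immediate from the recursion and from $\hlam_i\le\hlam_1$; for the matching lower bound one splits into two cases according to which argument of the preceding min is attained. If $\sum_{i<k}\hlam_i=A_{k-1}-1$ (``sum binding'') then $\hlam_{k-1}\ge\lambda_{k-1}\ge\lambda_k$, which forces $\hlam_k=\lambda_k$ and hence $\sum_{i\le k}\hlam_i=A_k-1$; in the ``partition-binding'' case $\sum_{i<k}\hlam_i=(k-1)\hlam_1$ one has $\hlam_1=\dots=\hlam_{k-1}$, so $\sum_{i\le k}\hlam_i=k\hlam_1$.

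Granting the closed form, the conclusion is quick: any $\tilde\mu\in\mathcal{M}$ is a partition of $n-1$ with $\tilde\mu_1=\lambda_1-1=\hlam_1$, so $\sum_{i\le k}\tilde\mu_i\le k\hlam_1$; and $\mu\domle\lambda$ gives $\sum_{i\le k}\tilde\mu_i=\sum_{i\le k}\mu_i-1\le A_k-1$. Combining the two bounds yields $\sum_{i\le k}\tilde\mu_i\le\sum_{i\le k}\hlam_i$, i.e.\ $\tilde\mu\domle\hlam$, as required. The main obstacle is the inductive verification of the closed-form partial-sum formula for $\hlam$; every other ingredient is either routine Gr\"obner-basis bookkeeping or a direct rearrangement of the recursive definition of $\hlam$.
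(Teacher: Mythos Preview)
Your approach is essentially the paper's: apply the elimination theorem to a known Gr\"obner basis (you use Corollary~\ref{pre minimal GB}, the paper uses Theorem~\ref{HW}), observe that $f_T$ survives elimination iff $\sigma(n)$ sits alone in the first row, and conclude by showing that $\hlam$ is the dominance-maximum among the resulting shapes $\tilde\mu$. The paper simply asserts this last point without argument; your closed-form identity $\sum_{i\le k}\hlam_i=\min(A_k-1,\,k\hlam_1)$ is a clean way to make it explicit, and the final deduction $\tilde\mu\domle\hlam$ from it is correct.

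Two small slips in the induction deserve fixing. In the partition-binding case you conclude $\sum_{i\le k}\hlam_i=k\hlam_1$, but the recursion only gives $\hlam_k=\min\bigl(\hlam_1,\,A_k-(k-1)\hlam_1-1\bigr)$, hence $\sum_{i\le k}\hlam_i=\min(k\hlam_1,\,A_k-1)$; happily this is already the target formula, so that branch is in fact immediate once stated correctly. In the sum-binding case, the step ``$\hlam_{k-1}\ge\lambda_{k-1}$'' implicitly uses $S_{k-2}\le A_{k-2}-1$, which fails at $k=2$ (where $S_0=0$ and $A_0-1=-1$); but since $S_1=A_1-1=1\cdot\hlam_1$ always, the case $k=2$ is automatically covered by the partition-binding branch, so no harm is done once you note this.
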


\begin{proof}
Set $S':= K[x_{\sigma(1)}, \ldots, x_{\sigma(n-1)}]$ and 
$J:=I_\lambda \cap S'$. 
Since our order $\prec$ is a lexicographic order, it is an  elimination order, that is, $\ini{f_T} \in S'$ implies $f_T \in S'$. By \cite[Proposition~15.29]{E}, 
\begin{equation*}%\label{GB elimination}
   {\mathcal G} = \{ \, f_T \mid T \in \stab \mu,\, \mu \vdash n, \, \mu \domle\lambda, \,  %\mu_1 =\lambda_1, 
    \ini{f_T} \in S' \, \}
\end{equation*}
is a Gr\"obner basis of $J$. 
Here, as the monomial order on $S'$, we use the restriction of $\prec$ to $S'$. Take $f_T$ for $T \in \stab \mu$ with $\mu \vdash n$ and  $\mu \domle\lambda$. 
Clearly, $\ini{f_T} \in S'$ if and only if $\sigma(n)$ is in the first row of $T$. If this is the case, we have $\mu_1 > \mu_2$. 
Moreover, removing $\sigma(n)$ from $T$, we get 
%$T' \in \stab{\mu'}$
$T' \in \stab{\widehat{\mu}}$  
satisfying 
%$\ini{f_T}=\ini{f_{T'}}$, 
$f_T=f_{T'}$,
%where $\mu' \vdash (n-1)$ is given by $\mu'_1 =\mu_1 -1$ and $\mu'_i =\mu_i$ for $i \ge 2$,
%\textcolor{red}{i.e., $\mu'=\widehat{\mu}$.
Thus
\begin{eqnarray*}
{\mathcal G} &=&  \{ \, f_{T'} \mid T' \in \stab {\widehat{\mu}},\, \mu \vdash n, \, \mu \domle\lambda, \, 
   \mu_1 > \mu_2 \, \}\\
     &=&  \{ \, f_{T'} \mid T' \in \stab {\nu},\, \nu \vdash (n-1), \, \overline{\nu} \domle\lambda\, \}.
\end{eqnarray*}
By Lemma \ref{additional_lemma}, we have 
$${\mathcal G} = \{ \, f_{T'} \mid T' \in \stab \nu,\, \nu \vdash (n-1), \, \nu \domle\hlam\, %\nu_1 =\hlam_1 
\}.$$
Since
%which 
this is a Gr\"obner basis of $I_\hlam$, 
%by Corollary~\ref{pre minimal GB}, 
%coincides with the set ${\mathcal G}$.
%\eqref{GB elimination}. 
%Thus 
we have $J=I_\hlam$. 
\end{proof}

\section{The proofs of the main results}
The following proposition is just a special case of Theorem~\ref{n!/(m+1)!} below. 
However, for better exposition, we prove it independently.   
Later, in Theorem~\ref{n!/(m+1)!} below, we will see that the converse of the proposition also holds.

\begin{proposition}\label{n!}
If a partition $\lambda=(\lambda_1, \ldots, \lambda_m)$ of $n$ satisfies  $\lambda_{i-1}=\lambda_i>0$ for some $i$, considering all monomial orders of $S$ 
{\rm (}equivalently, considering all $\sigma \in \fS_n${\rm )}, 
the Specht ideal $I_\lambda$ admits $n!$ distinct initial ideals.
\end{proposition}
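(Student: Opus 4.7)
I proceed by induction on $n$. The base case $n = 2$ with $\lambda = (1,1)$ is immediate. More generally, the case $\lambda_1 = 1$ (so $\lambda = (1^n)$) can be dispatched directly: $I_\lambda$ is the principal ideal generated by the Vandermonde $\prod_{i<j}(x_i - x_j)$, whose leading monomial under $\prec_\sigma$ is $\prod_{k=1}^{n} x_{\sigma(k)}^{k-1}$. Since the exponent sequence $(0, 1, \ldots, n-1)$ is strictly increasing, this monomial uniquely encodes $\sigma$, giving $n!$ distinct initial ideals.

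For the inductive step with $\lambda_1 \geq 2$, I reduce to $\hlam$ via Lemma~\ref{elimination}. A short case analysis using the recurrence defining $\hlam$ shows that $\hlam$ also has a repeat: if $\lambda_1 = \lambda_2$, then $\hlam_1 = \hlam_2 = \lambda_1 - 1 > 0$; if the repeat occurs at some $i \geq 3$ with $\lambda_1 > \lambda_2$, the recurrence yields $\hlam = (\lambda_1 - 1, \lambda_2, \ldots, \lambda_m)$, preserving the repeat. Hence the inductive hypothesis is applicable to $\hlam$.

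Suppose $\sigma, \sigma' \in \fS_n$ give the same initial ideal $J := \init_{\prec_\sigma}(I_\lambda) = \init_{\prec_{\sigma'}}(I_\lambda)$. The argument has two steps: (i) show $\sigma(n) = \sigma'(n) =: c$; (ii) once $c$ is identified, Lemma~\ref{elimination} gives $J \cap K[x_j : j \neq c] = \init(I_\hlam)$ computed with respect to both $\sigma|_{[n-1]}$ and $\sigma'|_{[n-1]}$, and the inductive hypothesis forces these restrictions to coincide, so $\sigma = \sigma'$.

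The main obstacle is step (i): intrinsically identifying $\sigma(n)$ from the monomial ideal $J$ alone. My approach is to compute, for each candidate $c$, the Hilbert function of the ``pseudo-elimination'' $K[x_j : j \neq c] / (J \cap K[x_j : j \neq c])$; for $c = \sigma(n)$ this equals the Hilbert function of $S'/I_\hlam$ (by Lemma~\ref{elimination}), whereas for $c \neq \sigma(n)$ the Hilbert function ought to strictly differ, since only the $\prec$-largest variable of a lex order admits a clean elimination. An alternative route exploits the restricted removable-corner structure of $\lambda$ forced by the repeat $\lambda_{i-1} = \lambda_i$: for instance when $\lambda_1 = \lambda_2$ the position $(1,\lambda_1)$ is not a removable corner, so $\sigma(n)$ lies in row $\ge 2$ of every shape-$\lambda$ standard tableau, forcing $x_{\sigma(n)}$ to appear in every minimum-total-degree minimal generator of $J$ and thereby singling out $\sigma(n)$.
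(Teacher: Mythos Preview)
Your inductive framework (detect $\sigma(n)$, then apply Lemma~\ref{elimination} and induction to $\hlam$) is exactly the paper's, and your verification that $\hlam$ inherits a repeated part is correct. The gap is entirely in step~(i).

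Neither of your two proposals for identifying $\sigma(n)$ is complete. The Hilbert-function idea is stated only as a heuristic (``ought to strictly differ'') with no argument; proving it would require comparing $J\cap K[x_j:j\ne c]$ for all $c$, and nothing in the paper supplies such a comparison. Your alternative route is actually false as stated: take $\lambda=(2,2,2)$ with $\sigma=\mathrm{id}$. In any $T\in\stab{\lambda}$ the entry $5$ can never sit in row~$1$ (it would have to occupy $(1,2)$, forcing two entries $>5$ below it in column~$2$, but only $6$ is available). Hence $x_5$ divides every minimum-degree minimal generator of $\ini{I_\lambda}$, just as $x_6$ does, and your criterion cannot separate $\sigma(5)$ from $\sigma(6)$.

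The paper resolves step~(i) with a different, simple invariant. For each index $c$ set
\[
d_\lambda(c)\;=\;\sum_{T\in\stab{\lambda}}\deg_c\bigl(\ini{f_T}\bigr),
\]
which is intrinsic to $\ini{I_\lambda}$ because the monomials $\ini{f_T}$ for $T\in\stab{\lambda}$ are precisely the minimum-degree minimal generators of $\ini{I_\lambda}$. Lemma~\ref{d_lam} shows $d_\lambda(\sigma(i))\le d_\lambda(\sigma(i+1))$ for all $i$, with strict inequality exactly when some standard tableau puts $\sigma(i)$ and $\sigma(i+1)$ in the same column. Taking $i=\max\{j:\lambda_{j-1}=\lambda_j>0\}$, one easily builds $T\in\stab{\lambda}$ with $\sigma(n-1)$ at $(i-1,\lambda_i)$ and $\sigma(n)$ at $(i,\lambda_i)$, so $d_\lambda(\sigma(n))>d_\lambda(\sigma(n-1))\ge d_\lambda(\sigma(j))$ for all $j<n$. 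Thus $\sigma(n)$ is the unique index maximizing $d_\lambda$. In the $(2,2,2)$ example this is the distinction your criterion missed: both $x_5$ and $x_6$ appear in every generator, but $x_6$ appears with higher total exponent.
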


\begin{proof}
We prove the statement by induction on $n$. 
By Lemma \ref{pickup},
it suffices to show 
that, if $\sigma, \sigma' \in \fS_n$ give the same initial ideal of $I_\lambda$, then we have $\sigma = \sigma'$,
%
%that different $\sigma, \sigma' \in \fS_n$ give different initial ideals of $I_\lambda$, 
in other words, we can recover a unique $\sigma  \in \fS_n$ from $\ini {I_\lambda}$.
Let $i= \max \{ j \mid \lambda_{j-1}=\lambda_j >0\}$.
Then there exists $T \in \stab \lambda$ such that $\sigma(n)$ (resp. $\sigma(n-1)$) is in the right most box of the $i$-th (resp. $(i-1)$-st) row, that is, in the $(i, \lambda_i)$ (resp. $(i-1, \lambda_i)$) position. 
Since $\sigma(n)$ and $\sigma(n-1)$ are in the same column of $T$, we have $d_\lambda(\sigma(n))> d_\lambda(\sigma(n-1))$, and hence $d_\lambda(\sigma(n))> d_\lambda(\sigma(i))$ for all $1 \le i <n$ by Lemma~\ref{d_lam}. 
Thus we can detect $\sigma(n)$, in other words, $\sigma(n) = \sigma'(n)$ if $\sigma, \sigma' \in \fS_n$ give the same initial ideal of $I_\lambda$.

As we have seen in the proof of   Lemma~\ref{elimination}, we have 
$$\ini{I_\hlam} = \ini{I_\lambda} \cap K[x_{\sigma(1)}, \ldots, x_{\sigma(n-1)}],$$
where we use the lexicographic order with $x_{\sigma(1)} \prec x_{\sigma(2)} \prec \cdots \prec x_{\sigma(n-1)}$  as the monomial order in $K[x_{\sigma(1)}, \ldots, x_{\sigma(n-1)}]$. 
If $\lambda_1>\lambda_2$, then we have $i\ge 3$ and $\hlam_{i-1}=\lambda_{i-1}=\lambda_i=\hlam_i$. 
If $\lambda_1=\lambda_2$, then $\hlam_1=\hlam_2$. 
Hence $\hlam$ always satisfies the assumption of the proposition. 
By induction hypothesis, we can detect each of $\sigma(1),\ldots, \sigma(n-1)$ from $\ini{I_\hlam}$ (hence, from $\ini{I_\lambda}$). 
\end{proof}

\begin{example}
If $\lambda=(2,2)$, $I_\lambda$ admits $4!$ distinct initial ideals, and we can recover the permutation $\sigma$ from $\ini {I_\lambda}$ by Proposition~\ref{n!}. However, it is easy to see that 
$$d_\lambda(\sigma(2))= d_\lambda(\sigma(3))= 1,$$
in other words, $d_\lambda(-)$ does not distinguish $\sigma(2)$ from $\sigma(3)$.  
Hence we have to consider $\mu$ with $\mu \doml \lambda$. 
\end{example}

We are in a position to give the number of distinct initial ideals of Specht ideals.

\begin{theorem}\label{n!/(m+1)!}
Let $\lambda=(\lambda_1, \ldots, \lambda_m)$ be a partition of $n$ with $\lambda_m>0$, and set 
$k:=\min \{ \, \lambda_{i-1}-\lambda_i \mid i=2,3, \ldots, m \, \}$. 
Then the Specht ideal $I_\lambda$ admits exactly $n!/(k+1)!$ distinct initial ideals under all monomial orders of $S$.% {\rm (}equivalently, all $\sigma \in \fS_n${\rm )}.
\end{theorem}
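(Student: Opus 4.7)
The strategy is to prove the upper bound $\#\{\text{initial ideals}\}\le n!/(k+1)!$ and the matching lower bound $\ge n!/(k+1)!$ separately. The upper bound will be obtained by showing that $\ini{I_\lambda}$ is invariant under the symmetric group $\fS_{k+1}$ acting by permutation of the ``top'' variables $\{x_{\sigma(n-k)}, x_{\sigma(n-k+1)}, \ldots, x_{\sigma(n)}\}$, while the lower bound will come by induction on $n$, using the elimination Lemma~\ref{elimination} to relate the situation for $\lambda$ to that for $\hlam$.

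For the upper bound, the key structural claim that I will establish is a strengthening of Corollary~\ref{cor of gap for mu}: for any column-standard tableau $T\in\stab{\mu}$ with $\mu\domle\lambda$ and $\mu_1=\lambda_1$ such that $\ini{f_T}$ is a minimal generator of $\ini{I_\lambda}$, each column of $T$ contains at most one entry from the top set $\{\sigma(n-k),\ldots,\sigma(n)\}$. I would prove this by contradiction: if two top-set entries $\sigma(p),\sigma(q)$ (with $p<q$) occupy column $c$ at rows $r_1<r_2$, then by adapting the lifting construction used in the proof of Lemma~\ref{gap for mu} to the lower entry $\sigma(q)$, one produces a partition $\nu$ with $\mu\doml\nu\domle\lambda$ and a tableau $T'\in\stab{\nu}$ whose initial monomial $\ini{f_{T'}}$ strictly divides $\ini{f_T}$, contradicting the minimality. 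Granted this column constraint, for any $\pi\in\fS_{k+1}$ the tableau $\pi(T)$ obtained by permuting the top-set entries of $T$ by $\pi$ remains column standard, and the formula $\ini{f_T}=\prod_{i=1}^n x_i^{d_i-1}$ yields $\ini{f_{\pi(T)}}=\pi(\ini{f_T})$, so $\pi$ permutes the minimal generators of $\ini{I_\lambda}$ and hence fixes the ideal.

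For the lower bound, I plan to induct on $n$, with the base case $k=0$ covered by Proposition~\ref{n!}. A direct computation from the definition of $\hlam$ shows that the minimum gap of $\hlam$ equals $k-1$ when the minimum gap of $\lambda$ is achieved at $i=2$ (equivalently when $\lambda_1-\lambda_2=k$), and equals $k$ otherwise. Given $\ini{I_\lambda}$, Lemma~\ref{elimination} expresses $\ini{I_\hlam}$ as the intersection $\ini{I_\lambda}\cap K[x_{\sigma(1)},\ldots,x_{\sigma(n-1)}]$, so by letting the ``eliminated'' variable $\sigma(n)$ range over the top set of $\sigma$ (which by the upper bound all produce the same $\ini{I_\lambda}$) and applying the induction hypothesis to each resulting $\ini{I_\hlam}$, one recovers the initial segment $(\sigma(1),\ldots,\sigma(n-k-1))$ together with the top set $\{\sigma(n-k),\ldots,\sigma(n)\}$ uniquely, yielding the required $n!/(k+1)!$ distinct initial ideals.

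The hard part will be proving the structural column-constraint claim underlying the upper bound. The lifting argument in Lemma~\ref{gap for mu} relies essentially on $\sigma(n)$ being at the rightmost box of its row; an interior top-set entry $\sigma(q)$ with $q<n$ is generally not at the rightmost position of its row, so one must carefully locate the slot in a higher row into which $\sigma(q)$ can be lifted while preserving both column standardness and the dominance $\mu\doml\nu\domle\lambda$. A secondary subtlety arises in the lower-bound induction when $k_{\hlam}=k$, where the inductive hypothesis does not immediately determine $\sigma(n-k-1)$ and must be supplemented by comparing the restrictions arising from distinct choices of the eliminated top-set element.
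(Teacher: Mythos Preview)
Your overall architecture — an upper bound via a symmetry of $\ini{I_\lambda}$ under permutations of the ``top'' variables, and a lower bound by reconstructing the ordered prefix $(\sigma(1),\ldots,\sigma(n-k-1))$ — is exactly the paper's. The column-constraint claim you isolate is precisely statement $(*)$ in part~(a) of the paper's proof, and once it is established your symmetry argument for the upper bound is correct (each top-set entry is forced to sit at the bottom of its column, so permuting them preserves column standardness and shape). Where your proposal diverges is in how you prove the two halves.

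For the column constraint, the paper does \emph{not} lift an interior top-set entry directly. Instead it argues by induction on $n$: Corollary~\ref{cor of gap for mu} already forces $\sigma(n)$ to be distinct from the two offending entries, so one removes $\sigma(n)$ from $T$, passes to a partition $\lambda'\vdash n-1$ (obtained by shrinking the row that contained $\sigma(n)$), and invokes the inductive hypothesis for $\lambda'$. The direct lifting you propose runs into the very obstacle you flag: for $\sigma(q)$ with $q<n$ there is no analogue of Lemma~\ref{gap for mu} controlling the gap at its row, and manufacturing one seems to require essentially the same inductive reduction. So your route here is not wrong in principle, but the paper's induction is both simpler and avoids the unresolved difficulty you identify.

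For the lower bound there is a genuine gap. Your induction via $\hlam$ presupposes that you already know the top set $\{\sigma(n-k),\ldots,\sigma(n)\}$ in order to choose which variable to eliminate, but nothing in your outline explains how to read this set off from $\ini{I_\lambda}$. Lemma~\ref{elimination} only gives $I_{\hlam}$ when the eliminated variable is the largest one, so eliminating an arbitrary $x_r$ does not produce a Specht initial ideal to which the inductive hypothesis applies. The paper handles this by first proving, via an explicit tableau construction, that $d_\lambda(\sigma(n-k-1))<d_\lambda(\sigma(n-k))$; combined with part~(a) this makes $A_2$ detectable as the set where $d_\lambda$ is maximal. Only then does the paper eliminate variables — and it does so not one step at a time via $\hlam$, but all the way down to a partition $\mu$ with $\mu_1=\mu_2$, where Proposition~\ref{n!} finishes the job. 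Your ``secondary subtlety'' in the case $k_{\hlam}=k$ is a symptom of the same missing ingredient: without the $d_\lambda$ computation you cannot separate $\sigma(n-k-1)$ from the top set.
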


\begin{proof}
By Proposition~\ref{n!},
we may assume $k>0$, and hence $\lambda_1 > \cdots > \lambda_m$.
Take $p$ with $k=\lambda_{p-1} -\lambda_p$. 
To the Young diagram of $\lambda$, 
we put $\sigma(n-k)$ in the right most box of the $p$-th row (i.e., in the $(p, \lambda_p)$ position), 
and $\sigma(n-k-1)$ just above it (i.e., in the $(p-1, \lambda_p)$ position). 
In the right of the box filled by $\sigma(n-k-1)$, 
there are  $k$ boxes, and we fill them by  $\sigma(n-k+1), \sigma(n-k+2), \ldots, \sigma(n)$ 
in the suitable order. 
Filling the remaining boxes in a suitable way, we get 
$T \in \stab{\lambda}$ such that  $\sigma(n-k-1)$ and $\sigma(n-k)$ are in the same column of $T$.
Thus $d_\lambda(\sigma(n-k-1)) < d_\lambda(\sigma(n-k))$, and hence we can detect the sets
$A_1 := \{ \sigma(1), \sigma(2), \ldots, \sigma(n-k-1) \}$ 
and
$A_2:=\{ \sigma(n-k), \sigma(n-k+1), \ldots, \sigma(n) \}$
from $\ini{I_\lambda}$. 
Since $\sharp A_2 = k+1$,  it is enough to show that, from $\ini{I_\lambda}$,
\begin{itemize}
    \item[(a)]
we cannot distinguish $\sigma(n-k), \sigma(n-k+1), \ldots, \sigma(n)$ from each other,
and
    \item[(b)]
we can detect each of $\sigma(1), \sigma(2), \ldots, \sigma(n-k-1)$. 
\end{itemize}

\smallskip

\noindent
(a) 
It suffices to show the following statement. 
\begin{itemize}
\item [($*$)] If some two elements of $A_2$ are in the same column of  some  $T \in \stab{\mu}$ with $\mu \domle \lambda$, then $\ini{f_T}$ is not a minimal generator of $\ini{I_\lambda}$. 
\end{itemize}
In fact, if $(*)$ holds, we can find a generating set of $\ini{I_\lambda}$ which is stable under the action of $\fS_{A_2}$ by an argument similar to the proof of Lemma~\ref{d_lam}. 

The proof of $(*)$ is by induction on $n$. 
Assume that $\ini{f_T}$ with $T \in \stab{\mu}$ is a minimal generator of $\ini{I_\lambda}$ and some two elements $\sigma(\alpha), \sigma(\beta)
\in A_2$
are in the same column of $T$. 
By Corollary~\ref{cor of gap for mu}, 
we have $\sigma(n) \neq \sigma(\alpha), \sigma(\beta)$, and it implies that $k \ge 2$.
Assume that $\sigma(n)$ is in the $j$-th row of $T$. 
Removing $\sigma(n)$ from $T$, we have a standard tableau $T'$ of shape $\mu' \vdash (n-1)$ with $\mu'_j=\mu_j -1$ and $\mu'_i=\mu_i$ for $i \ne j$. 
Since $\lambda_j >\lambda_{j+1}$, 
we can define the partition $\lambda' \vdash (n-1)$ by $\lambda'_j=\lambda_j-1$ and $\lambda'_i=\lambda_i$ for $i \ne j$. 
Clearly, $\mu' \domle \lambda'$.
%Since $\mu_1=\lambda_1$, we have $\mu'_1=\lambda'_1$. 

Since $k':=\min \{ \, \lambda'_{i-1}-\lambda'_i \mid i=2,3, \ldots, m \, \} \ge k-1$ and 
$$
\sigma(\alpha), \sigma(\beta)
\in
A_2 \setminus \{ \sigma(n)\}=\{\sigma(n-k), \ldots, \sigma(n-1)\}
\subset \{\sigma((n-1)-k'), \ldots, \sigma(n-1)\}
,$$ we can apply the induction hypothesis, that is, the condition $(*)$ for $T'$ and $\ini{I_{\lambda'}}$. 
Hence $\ini{f_{T'}}$ is not a minimal generator of $\ini{I_{\lambda'}}$. 
In other words, there exists $\nu' \vdash (n-1)$ with $\nu' \domle \lambda'$, and $T_1' \in \stab{\nu'}$ such that $\ini{f_{T'_1}}$ strictly divides $\ini{f_{T'}}$. 
Since $\sum_{i=1}^j v'_i \le \sum_{i=1}^j \lambda'_i < \sum_{i=1}^j \lambda_i$, by an argument similar to the proof of Lemma~\ref{gap for mu}, we can find  $l \le j$ such that the sequence $\nu=(\nu_1, \nu_2, \ldots)$ given by $\nu_l=\nu'_l+1$ and $\nu_i=\nu'_i$ for $i \ne l$ is a partition of $n$ satisfying $\nu \domle \lambda$. Adding $\sigma(n)$ to the $l$-th row of $T_1'$, we get a standard tableau $T_1 \in \stab{\nu}$.   
Note that  $\ini{f_T} = \ini{f_{T'}} \cdot x_{\sigma(n)}^{j-1}$ and 
$\ini{f_{T_1}} = \ini{f_{T_1'}} \cdot x_{\sigma(n)}^{l-1} $. 
Since $l \le j$  and   $\ini{f_{T'_1}} $ strictly divides $\ini{f_{T'}}$, we have $\ini{f_{T_1}} \, ( \in \ini{I_\lambda})$ strictly divides $\ini{f_T}$. This contradicts the assumption that $\ini{f_T}$ is a minimal generator of $\ini{I_\lambda}$.

\smallskip

\noindent
(b) 
Set 
$
l:= \lambda_1 - \lambda_2$ ($ \ge k$). 

\smallskip

\noindent
\textbf{Case 1}: Assume that $l >k$.
First, we will show that
\begin{equation}\label{n-l to n-k}
d_\lambda(\sigma(n-l)) < d_\lambda(\sigma(n-l+1)) < \cdots < d_\lambda(\sigma(n-k-1)).
\end{equation}
It suffices to show that, for $j$ with $n-l < j < n-k$, there exists $T \in \stab{\lambda}$ such that  $\sigma(j-1)$ and $\sigma(j)$ are in the same column of $T$.   
Take $p$ with $k=\lambda_{p-1} -\lambda_p$. 
Then $p\ge 3$.
To the Young diagram of $\lambda$, we put $\sigma(j)$ in the right most box of the $p$-th row (i.e., in the $(p, \lambda_p)$ position), and $\sigma(j-1)$ just above it (i.e., in the $(p-1, \lambda_p)$ position). In the right of the box filled by $\sigma(j-1)$, there are  $k\, (=\lambda_{p-1} -\lambda_p)$ boxes, and we fill them by  $\sigma(j+1), \sigma(j+2), \ldots, \sigma(j+k)$ in the  suitable order. Next, we fill the boxes in the first row from the right most one by $\sigma(j+k+1), \sigma(j+k+2), \ldots, \sigma(n)$ in the suitable order. Since 
$$\# \{\sigma(j+k+1), \sigma(j+k+2), \ldots, \sigma(n) \} =n-j-k < l,$$
these numbers are contained in the ``peninsula" part of the first row. 
Filling the remaining boxes in a suitable way, we get a desired standard tableau.

By \eqref{n-l to n-k}, we can detect the set $\{ \sigma(1), \sigma(2), \ldots, \sigma(n-l) \}$ from $\ini{I_\lambda}$. 
Hence we can take the ideal 
$I_\lambda \cap K[x_{\sigma(1)}, x_{\sigma(2)}, \ldots, x_{\sigma(n-l)}]$, which equals $I_\mu$ by the repeated use of Lemma~\ref{elimination}. 
Here $\mu$ is the partition of $(n-l)$ given by $\mu_1=\lambda_1-l=\lambda_2$ and $\mu_i=\lambda_i$ for $i \ge 2$. 
Since $\mu_1=\mu_2$, we can detect each of $\sigma(1), \ldots, \sigma(n-l)$ from $\ini{I_\mu}$ (hence, from $\ini{I_\lambda}$) by Proposition~\ref{n!}. 
Combining with $\eqref{n-l to n-k}$, we can detect each of  $\sigma(1), \sigma(2), \ldots, \sigma(n-k-1)$ from $\ini{I_\lambda}$.  

\smallskip

\noindent
\textbf{Case 2}: Assume that $l= k$.
Recall that we can detect the set
$A_1 = \{ \sigma(1), \sigma(2), \ldots$, $\sigma(n-k-1) \}$.
As we have seen in (a),
we cannot detect $\sigma(n-k)$ from $A_2$,
in other words, the variables $x_{\sigma(n-k)}, \dots, x_{\sigma(n)}$ appear 
in the initial ideal $\ini{I_\lambda}$ in the same way.
Take any $r \in A_2$, and consider 
$$I_\mu =I_\lambda \cap K[x_{\sigma(1)}, x_{\sigma(2)}, \ldots, x_{\sigma(n-k-1)}, x_r],$$
where $\mu \vdash (n-l)$ is the partition given in Case 1. 
By Proposition~\ref{n!}, we can detect each of $\sigma(1), \sigma(2), \ldots, \sigma(n-k-1)$ from $\ini{I_\lambda}$. 
\end{proof}

Let $\{0\} \ne I \subset K[x_1,\ldots,x_n]$ be a homogeneous ideal.
Given a vector $\wb \in \RR^n$,
let $\init_\wb (I)$ denote the initial form ideal of $I$ with respect to $\wb$.
Note that $\init_\wb (I)$ is not necessarily a monomial ideal.
For example, if $\wb = {\bf 0}$, then  $\init_\wb (I)=I$.
Since $I$ is homogeneous, for any vector $\wb_1 \in \RR^n$, 
there exists a nonnegative vector $\wb_2 \in \RR^n$ such that 
$\init_{\wb_1} (I) = \init_{\wb_2} (I)$.
Given a vector $\wb \in \RR^n$, let
$$
C[\wb]:= \{ \wb' \in \RR^n \mid  \init_\wb (I) = \init_{\wb'} (I) \}.
$$
In general, $C[\wb]$ is a relatively open convex polyhedral cone (\cite[Proposition~2.3]{Stu}).
A \textbf{fan} is a polyhedral complex consisting of the cones from the origin.
Let
$$
{\rm GF}(I) := 
\left\{\left.
\overline{C[\wb]} \ \right| \  \wb \in \RR^n
\right\},
$$
where $\overline{C[\wb]}$ is the closure of $C[\wb]$.
Then ${\rm GF}(I)$ is a fan, and called the \textbf{Gr\"obner fan} of $I$.
Note that ${\rm GF}(I)$ is \textbf{complete}, i.e., 
$$\bigcup_{C \in {\rm GF}(I)} C  = \RR^n.$$
The \textbf{normal fan} of a polytope $P \subset \RR^n$ is a fan
that is dual to $P$.
A convex polytope $P \subset \RR^n$ is called a \textbf{state polytope} of $I$ 
if ${\rm GF} (I)$ is the normal fan of $P$.
There is a one to one correspondence between the initial ideals of $I$ and the vertices of the state polytope of $I$.

Given a vector $(u_1,\dots,u_n) \in \RR^n$ %with $u_1 \ge \dots \ge u_n$,
let $P_n (u_1,\dots,u_n)$ be the convex hull of the set
$$
%P_n (u_1,\dots,u_n) := {\rm Conv} (
\{
(u_{\sigma(1)}, u_{\sigma(2)}, \dots, u_{\sigma(n)})
\in \RR^n \mid \sigma \in \fS_n
\}
%)
.
$$
In particular, $\Pi_n :=P_n(1,2,\dots,n)$ is called  the \textbf{permutohedron}
of order $n$.
It is known that
the normal fan of the permutohedron $\Pi_n$ is
the \textbf{braid fan} ${\rm Br}_n$ that is the complete fan in $\RR^n$ given by the
hyperplanes $x_i - x_j = 0$ for all $i\ne j $.
Each maximal cone of ${\rm Br}_n$ is of the form
$$
\{ \wb \in \RR^n \mid w_{\sigma(1)} \le w_{\sigma(2)}  \le \cdots \le 
w_{\sigma(n)} 
\}
$$
for some $\sigma \in \fS_n $.
See \cite[Section 3.2]{PRW} for details.
For $0 \le k < n$, let 
$$\Pi_{n,k} := P_n(1,2,\ldots,n-k-1, n-k,\dots,n-k).$$
Then each maximal cone of the normal fan of $\Pi_{n,k}$ is of the form 
$$
C_{\sigma,k} :=
\{ \wb \in \RR^n \mid w_{\sigma(1)} \le w_{\sigma(2)} \le \cdots \le w_{\sigma(n-k)},\dots,w_{\sigma(n)}
\}
$$
for some $\sigma \in \fS_n$.
A \textbf{generalized permutohedron} \cite{Pos}
is a polytope
obtained by moving the vertices of
a permutohedron while keeping the same edge directions. 

\begin{proposition}[{\cite{PRW}}]
\label{gp}
A polytope $P\subset \RR^n$ is 
a generalized permutohedron if and only if the normal fan of $P$ is
refined by the braid fan ${\rm Br}_n$.
\end{proposition}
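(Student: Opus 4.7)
The plan is to translate both directions of the equivalence into the same condition on the edge directions of $P$, using the standard duality between a polytope and its normal fan: to each edge $E$ of $P$ with direction vector $v$ corresponds the codimension-$1$ cone $\{\wb \in \RR^n \mid \wb \text{ is maximized on } P \text{ along } E\}$ of the normal fan, whose linear span is the hyperplane $v^\perp$; moreover, every codimension-$1$ cone of the normal fan of $P$ arises in this way. The proposition will follow once I show that ``${\rm Br}_n$ refines the normal fan of $P$'' is equivalent to ``every edge of $P$ is parallel to some $e_i-e_j$.''

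For the $(\Rightarrow)$ direction, a generalized permutohedron has edges parallel to those of $\Pi_n$, hence in directions $e_i-e_j$. Under the duality, each codimension-$1$ cone of the normal fan of $P$ lies in a braid hyperplane $\{x_i=x_j\}$. The open maximal cones of ${\rm Br}_n$ are the connected components of the complement of the union of braid hyperplanes, and each such component, being disjoint from the codimension-$1$ skeleton of the normal fan of $P$, lies entirely within a single open maximal cone of the normal fan of $P$. Taking closures and using that every cone of ${\rm Br}_n$ is a face of a maximal one then yields the desired refinement.

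For the $(\Leftarrow)$ direction, suppose ${\rm Br}_n$ refines the normal fan of $P$, and let $C$ be a codimension-$1$ cone of the normal fan. Then $C$ is a union of cones of ${\rm Br}_n$ and meets no interior of a maximal cone of ${\rm Br}_n$, so $C$ is contained in the finite union of braid hyperplanes. An $(n-1)$-dimensional convex polyhedral cone contained in a finite union of hyperplanes of $\RR^n$ must in fact be contained in one of them, since otherwise it would be covered by its intersections with those hyperplanes, each of dimension at most $n-2$. So the linear span of $C$ coincides with a single braid hyperplane $\{x_i=x_j\}$, and the corresponding edge of $P$ has direction $v$ satisfying $v^\perp=\{x_i=x_j\}$, making $v$ a scalar multiple of $e_i-e_j$. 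Thus every edge of $P$ is parallel to an edge of $\Pi_n$, identifying $P$ as a generalized permutohedron.

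The main delicacy I anticipate is reconciling the geometric definition of generalized permutohedron, a continuous deformation of $\Pi_n$ preserving edge directions with possible edge collapses, with the edge-direction criterion used in the $(\Leftarrow)$ direction. The standard way to bridge this is to observe that the polytopes whose normal fan is refined by ${\rm Br}_n$ form a cone under Minkowski sum and positive scaling and that $\Pi_n$ lies in its interior, so any such $P$ arises as a limit of Minkowski combinations $\Pi_n+tP$ as $t\to 0^+$, each combinatorially equivalent to $\Pi_n$, yielding the required deformation.
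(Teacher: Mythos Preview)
The paper does not supply a proof of this proposition: it is quoted from \cite{PRW} and used as a black box, so there is no in-paper argument to compare against. Your write-up is the standard reasoning behind this equivalence and is essentially correct: the codimension-$1$ cones of the normal fan of $P$ span the hyperplanes orthogonal to the edge directions of $P$, so the edge directions of $P$ all lie among $e_i-e_j$ exactly when every facet hyperplane of the normal fan is a braid hyperplane, which in turn is equivalent to ${\rm Br}_n$ refining the normal fan of $P$.

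One remark on the last paragraph. The informal definition in the paper (``moving the vertices of $\Pi_n$ while keeping the same edge directions'') is, in \cite{PRW} and in Postnikov's original paper, made precise exactly via the normal-fan condition you are proving; so strictly speaking you are verifying that the edge-direction description matches the fan-coarsening description, rather than reconciling two independent definitions. Your Minkowski-sum/limit sketch for the $(\Leftarrow)$ direction is along the right lines but as written is a bit telegraphic: the key fact is that for small $t>0$ the polytope $\Pi_n + tP$ has normal fan equal to the common refinement of the two normal fans, which is ${\rm Br}_n$ itself, so $\Pi_n + tP$ is combinatorially a permutohedron and the deformation $t\mapsto \Pi_n + tP$ followed by rescaling realizes $P$ as a degeneration of $\Pi_n$ with only edge collapses. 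If you want a self-contained argument, spelling this out would close the one soft spot in the proposal.
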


Since ${\rm Br}_n$ refines the normal fan of $\Pi_{n,k}$ for all $0 \le k <n$,
by Proposition~\ref{gp},
each $\Pi_{n,k}$ is a generalized permutohedron.

\begin{theorem}\label{genpermuto}
Let $\lambda=(\lambda_1, \ldots, \lambda_m)$ be a partition of $n$ with $\lambda_m>0$, and set 
$k=\min \{ \, \lambda_{i-1}-\lambda_i \mid i=2,3, \ldots, m \, \}$. 
Then the generalized permutohedron $\Pi_{n,k}$ is a state polytope of $I_\lambda$.
In particular, ${\rm GF}(I_\lambda)$ is the normal fan of $\Pi_{n,k}$, and
refined by the braid fan ${\rm Br}_n$.
\end{theorem}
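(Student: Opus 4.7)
The plan is to identify the maximal cones of ${\rm GF}(I_\lambda)$ with the maximal cones $C_{\sigma,k}$ of the normal fan of $\Pi_{n,k}$. Both collections have cardinality $n!/(k+1)!$ (the former by Theorem~\ref{n!/(m+1)!}, the latter because $\Pi_{n,k}$ has that many vertices), so the task reduces to showing that the two cone decompositions of $\RR^n$ actually coincide.

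First I would prove that the braid fan ${\rm Br}_n$ refines ${\rm GF}(I_\lambda)$. Fix $\sigma \in \fS_n$ and a generic $\wb$ in the open braid cone $\{w_{\sigma(1)} < \cdots < w_{\sigma(n)}\}$, and let $\prec$ be the lexicographic order with $x_{\sigma(1)} \prec \cdots \prec x_{\sigma(n)}$. By Theorem~\ref{HW}, the set $G := \{f_T : T \in \tab\mu,\, \mu \vdash n,\, \mu \domle \lambda\}$ is a Gr\"obner basis of $I_\lambda$ with respect to $\prec$, and we may assume every such $T$ is column standard with respect to $\sigma$. For such $T$, each factor $x_i - x_j$ of $f_T$ satisfies $x_i \prec x_j$, hence $w_i < w_j$, so $\init_\wb(f_T) = \pm \init_\prec(f_T)$ is a single monomial. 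Since $G$ is a Gr\"obner basis, the monomials $\init_\prec(f_T)$ generate $\ini{I_\lambda}$; as they also lie in $\init_\wb(I_\lambda)$, we get $\ini{I_\lambda} \subseteq \init_\wb(I_\lambda)$. Equality of Hilbert functions (both sides equal that of the homogeneous ideal $I_\lambda$) forces $\init_\wb(I_\lambda) = \ini{I_\lambda}$. Thus each open braid cone sits inside a single open Gr\"obner cone, proving the refinement.

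Next I would identify which braid cones coalesce into a single Gr\"obner cone. By parts (a) and (b) of the proof of Theorem~\ref{n!/(m+1)!}, two permutations $\sigma$ and $\sigma'$ produce the same initial ideal of $I_\lambda$ precisely when they agree in the first $n-k-1$ positions. Consequently, the maximal cone of ${\rm GF}(I_\lambda)$ containing $C_\sigma$ is the union of $C_{\sigma'}$ over all such $\sigma'$, and this union is exactly $C_{\sigma,k}$. Since a complete fan is determined by its collection of maximal cones, ${\rm GF}(I_\lambda)$ coincides with the normal fan of $\Pi_{n,k}$, proving that $\Pi_{n,k}$ is a state polytope of $I_\lambda$. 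The remaining ``in particular'' clauses are then immediate: the Gr\"obner fan is the normal fan by construction, and the refinement by ${\rm Br}_n$ was already established in the previous step (and is also automatic from Proposition~\ref{gp} once $\Pi_{n,k}$ is known to be a generalized permutohedron).

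The main obstacle is the identification $\init_\wb(I_\lambda) = \ini{I_\lambda}$ for generic $\wb$ in $C_\sigma$; once the column standard normalization of the universal Gr\"obner basis is in place this is really just the observation that $\wb$ and $\prec$ select the same leading term on each factor $x_i-x_j$, with the Hilbert series closing the gap from containment to equality. Everything beyond that is a merging argument built directly on top of Theorem~\ref{n!/(m+1)!}.
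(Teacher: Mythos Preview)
Your proposal is correct and follows essentially the same route as the paper's proof: use Theorem~\ref{n!/(m+1)!} to determine exactly which permutations $\sigma$ produce the same initial ideal (namely those agreeing in the first $n-k-1$ positions), and conclude that the maximal Gr\"obner cones are precisely the $C_{\sigma,k}$. The paper's argument is terser and leaves the weight-vector verification implicit, whereas you spell out explicitly---via the column-standard universal Gr\"obner basis and the Hilbert function comparison---why a generic $\wb$ in an open braid cone yields the same initial ideal as the associated lexicographic order; this is exactly the content behind the paper's phrase ``monomial orders for each initial ideal correspond to the cone.''
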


\begin{proof}
From Proof of Theorem~\ref{n!/(m+1)!}, 
$I_\lambda$ admits exactly $n!/(k+1)!$ distinct initial ideals, and
monomial orders for each initial ideal correspond to the cone
$$
\{ \wb \in \RR^n \mid w_{\sigma(1)} < w_{\sigma(2)}  < \cdots < w_{\sigma(n-k)} , \dots ,w_{\sigma(n)} 
\}
$$
for some $\sigma \in \fS_n$.
Since its closure is $C_{\sigma,k}$, it follows that 
 ${\rm GF}(I_\lambda)$ is the normal fan of $\Pi_{n,k}$
as desired.
\end{proof}

\begin{remark}
In the situation of Theorem~\ref{genpermuto}, the largest  possible value of $k$ is $n-2$, which occurs when $\lambda=(n-1,1)$ (we are assuming that $\lambda \ne (n)$). 
In this case, the state polytope $\Pi_{n,n-2}$ is an $(n-1)$-simplex. We also remark that the dimension of $\Pi_{n,k}$ is always $n-1$ for all $0 \le k \le n-2$. 
\end{remark}

\begin{corollary}\label{permuto}
Let $\lambda=(\lambda_1, \ldots, \lambda_m)$ be a partition of $n$ with $\lambda_m>0$.
If $\lambda_{i-1}=\lambda_i>0$ for some $i$, then 
the permutohedron $\Pi_n$ of order $n$ is a state polytope of $I_\lambda$,
and ${\rm GF}(I_\lambda)$ is the braid fan ${\rm Br}_n$.
\end{corollary}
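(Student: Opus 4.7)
The plan is to derive this corollary as a direct specialization of Theorem~\ref{genpermuto} to the case $k=0$. First I would unpack the hypothesis: the assumption that $\lambda_{i-1}=\lambda_i>0$ for some $i$ forces $2 \le i \le m$ (because $\lambda_i>0$ and $\lambda_m>0$), and therefore the set
\[
\{\,\lambda_{j-1}-\lambda_j \mid j=2,3,\ldots,m\,\}
\]
contains the value $0$. Since all entries of this set are nonnegative (the sequence $\lambda_1,\ldots,\lambda_m$ is non-increasing), the minimum $k$ defined in Theorem~\ref{genpermuto} is exactly $0$.

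Next I would invoke Theorem~\ref{genpermuto} with $k=0$ to conclude that $\Pi_{n,0}$ is a state polytope of $I_\lambda$ and ${\rm GF}(I_\lambda)$ is its normal fan. It then remains to identify $\Pi_{n,0}$ with the classical permutohedron $\Pi_n$. By the definition given just before Theorem~\ref{genpermuto},
\[
\Pi_{n,0} = P_n(1,2,\ldots,n-1,n),
\]
where the list has no repeated trailing entries when $k=0$; this is exactly $\Pi_n$. Equivalently, the description of the maximal cones $C_{\sigma,0}$ reduces to the chains $w_{\sigma(1)}\le w_{\sigma(2)}\le \cdots \le w_{\sigma(n)}$, which are precisely the maximal cones of the braid fan ${\rm Br}_n$, so the normal fan of $\Pi_{n,0}$ is ${\rm Br}_n$ as recalled in the preliminaries.

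Combining these two observations finishes the proof: $\Pi_n = \Pi_{n,0}$ is a state polytope of $I_\lambda$, and ${\rm GF}(I_\lambda)={\rm Br}_n$. There is essentially no obstacle here, as the corollary is purely a matter of reading off the $k=0$ specialization of Theorem~\ref{genpermuto}; the only step that deserves a line of justification is the collapse of the repeated tail in the defining tuple of $\Pi_{n,k}$ when $k=0$, which I would handle in one sentence as above.
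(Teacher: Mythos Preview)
Your proposal is correct and matches the paper's intended argument: the corollary is stated without proof precisely because it is the $k=0$ case of Theorem~\ref{genpermuto}, and your verification that the hypothesis forces $k=0$ and that $\Pi_{n,0}=\Pi_n$ (with normal fan ${\rm Br}_n$) is exactly what is needed.
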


We close this paper with a computational experiment obtained by the software
{\tt Gfan} \cite{GfanSoft}.
%in {\tt SageMath} \cite{Sage}.
Let $\lambda = (3,2,1)$ be a partition of $6$.
We prepare the input file {\tt input321.txt} whose contents are started with
$$Q[x1,x2,x3,x4,x5,x6]
\{x1^2 x2 x4 - x1 x2^2 x4 - x1^2 x3 x4 +  x2^2 x3 x4 + x1 x3^2 x4 - \cdots$$
and input

\medskip

\noindent
{\tt
gfan\_bases <input321.txt >output321.txt
}\\
{\tt
gfan\_bases <input321.txt | gfan\_leadingterms -m >outputinitial.txt
}

\medskip

\noindent
to {\tt Gfan}. 
Then the output shows that there are $360=6!/2!$ distinct initial ideals of $I_\lambda$,
and each initial ideal is generated by $17$ monomials. 
Among these 17 elements, 16 of them have degree 4, and one of them has degree 6. The element of degree 6 corresponds to the standard tableau 
$$
\ytableausetup{mathmode, boxsize=2em}
\begin{ytableau}
\sigma(1) & \sigma(5) & \sigma(6)  \\
\sigma(2) \\
\sigma(3) \\
\sigma(4) \\
\end{ytableau}.
$$
% これを追加することは、そこまで強くこだわりません。
The files {\tt input321.txt}, {\tt output321.txt} and {\tt outputinitial.txt}
are available at 

\begin{center}
\url{https://drive.google.com/drive/folders/} \hspace{5cm} \ \\
\hspace{2cm} \url{1yQF0zXZIeyUkNqTBfoO_seyOlVIw3oxC?usp=drive_link}
\end{center}

\bigskip

\noindent
\textbf{Acknowledgments}:
The authors are grateful to professors Yasuhide Numata, Satoshi Murai, Ryota Okazaki, Kosuke Shibata, Akihito Wachi and Junzo Watanabe
for the useful discussions 
at the MFO-RIMS Tandem Workshop ``Symmetries on polynomial ideals and varieties".
In particular, Lemma~\ref{elimination} was given in the discussion. 
%at the Mathematisches Forschungsinstitut Oberwolfach (MFO) and the Research Institute for Mathematical Sciences (RIMS).   
The first author is partially supported by KAKENHI 18H01134. 
The second author is partially supported by KAKENHI 22K03258.

\end{document}